\newtheorem{lemma}{Lemma}
\newtheorem{theorem}{Theorem}
\newtheorem*{corollary*}{Corollary}
\title{On the meromorphic continuation of Beatty Zeta-functions and Sturmian Dirichlet series}
\author{Athanasios Sourmelidis}
\begin{document}
\maketitle
\begin{abstract}
\noindent
For a positive irrational number $\alpha,$ we study the ordinary Dirichlet series $\zeta_\alpha(s)=\sum\limits_{n\geq1}\lfloor\alpha n\rfloor^{-s}$ and $S_\alpha(s)=\sum\limits_{n\geq1}(\left\lceil\alpha n\right\rceil-\left\lceil \alpha (n-1)\right\rceil){n^{-s}}.$ We prove relations between them and $J_{\boldsymbol{\alpha}}(s)=\sum\limits_{n\geq1}\left(\lbrace\alpha n\rbrace-\frac{1}{2}\right)n^{-s}.$ Motivated by the previous work of Hardy and Littlewood, Hecke and others regarding $J_{\boldsymbol{\alpha}},$ we show that $\zeta_\alpha$ and $S_\alpha$ can be continued analytically beyond the imaginary axis except for a simple pole at $s=1.$ Based on the latter results, we also prove that the series $\zeta_{\alpha}(s;\beta)=\sum\limits_{n\geq0}\left(\lfloor\alpha n\rfloor+\beta\right)^{-s}$ can be continued analytically beyond the imaginary axis except for a simple pole at $s=1.$
\end{abstract}
{\bf Keywords:} Analytic continuation; Diophantine aprroximation; Beatty sequences; Sturmian sequences; Zeta-functions
\section{Introduction and Main Results}
One of the standard problems concerning Dirichlet series is whether there exists a meromorphic continuation of them beyond their half-plane of convergence. Because of that, in addition to the abscissa of convergence $\sigma_c(f)$ and absolute convergence $\sigma_a(f)$ of a Dirichlet series $f,$ one also considers the furthest left abscissa $\sigma_{\mu}(f)$ of the vertical half-plane in which $f$ has a meromorphic continuation. In the case that $\sigma_\mu(f)$ is a finite number, the vertical line $\sigma_{\mu}(f)+i\mathbb{R}$ is called the natural boundary of $f.$  

 The basic model is the Riemann Zeta-function
\begin{align*}
\zeta(s)=\mathlarger\sum\limits_{n=1}^{\infty}\dfrac{1}{n^s}.
\end{align*}
It is well-known that $\sigma_c(\zeta)=\sigma_{a}(\zeta)=1$ and $\sigma_{\mu}(\zeta)=-\infty$ with a simple pole at $s=1.$

Of particular interest are Dirichlet series which are associated with problems of Diophantine approximation. For a positive irrational number $\alpha$ of type $\tau_\alpha$ (see Section \ref{Diop.appr.}) and a non-zero integer $q$, define
 \begin{align*}A_{\alpha q}:=\lbrace n\in\mathbb{N}:\lbrace \alpha n\rbrace<\lbrace \alpha q\rbrace\rbrace
 \end{align*}
and
\begin{align}\label{Hecke}
g_{\alpha q}(s):=\mathlarger\sum\limits_{n\in A_{\alpha q}}\dfrac{1}{n^s}.
\end{align} 
The latter series converges for $s\in\mathbb{C}_1,$ where 
\begin{align*}
\mathbb{C}_x:=\lbrace s\in\mathbb{C}:\mathrm{Re}(s)>x\rbrace
\end{align*} 
for $x\in[-\infty,+\infty],$
 and it was first considered by Hecke \cite{hecke1922analytische}. He proved for quadratic irrational $\alpha>0$, that  $g_{\alpha q}$ is meromorphically continued to the whole complex plane and its poles lie on a lattice of points depending on $\eta_\alpha$ (see Section \ref{Diop.appr.}). Firstly, we will prove a generalization of his result.
\begin{theorem}\label{gaq} For every irrational $\alpha>0$  and $q\in\mathbb{Z}^*,$ the Dirichlet series $g_{\alpha q}$ defined in (\ref{Hecke}) can be continued analytically to $\mathbb{C}_{-\frac{1}{\tau_{\alpha}}}$ except for a simple pole at $s=1$ with residue $\lbrace\alpha q\rbrace.$ If $\tau_\alpha>1,$ the vertical line $-\dfrac{1}{\tau_\alpha}+i\mathbb{R}$ is the natural boundary for $g_{\alpha q}.$
 \newline
 In the special case where $\alpha$ is a positive quadratic irrational, $g_{\alpha q}$ has a meromorphic continuation to the whole complex plane with a simple pole at $s=1$ and at most simple poles at $s=-k\pm\dfrac{2\pi i n}{\log\eta_\alpha},$ $(k,n)\in\mathbb{N}\times\mathbb{N}_0.$
\end{theorem}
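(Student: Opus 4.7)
The plan is to reduce $g_{\alpha q}$ to a linear combination of the Riemann zeta function and sawtooth-type Dirichlet series of the same flavour as $J_{\boldsymbol{\alpha}}$, and then to invoke the analytic properties developed for the latter in the rest of the paper.

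First I would make the indicator function of $A_{\alpha q}$ explicit. Writing $c:=\{\alpha q\}\in(0,1)$, a case distinction according to whether $\{\alpha n\}<c$ or $\{\alpha n\}\geq c$ yields
\begin{align*}
\mathbf{1}_{\{\alpha n\}<\{\alpha q\}}=\lfloor\alpha n\rfloor-\lfloor\alpha n-\{\alpha q\}\rfloor=\{\alpha q\}-\{\alpha n\}+\{\alpha n-\{\alpha q\}\},
\end{align*}
the boundary case $\{\alpha n\}=\{\alpha q\}$ being ruled out for $n\neq q$ by irrationality of $\alpha$, and both sides vanishing at $n=q$. Multiplying by $n^{-s}$, summing on $\mathbb{C}_1$, and subtracting/adding $\tfrac12$ inside each fractional part so that the two $\tfrac12\zeta(s)$ contributions cancel, I obtain the clean decomposition
\begin{align*}
g_{\alpha q}(s)=\{\alpha q\}\,\zeta(s)-J_{\boldsymbol{\alpha}}(s)+\widetilde{J}_{\alpha,q}(s),\qquad\widetilde{J}_{\alpha,q}(s):=\sum_{n\geq 1}\frac{\{\alpha n-\{\alpha q\}\}-\tfrac12}{n^{s}}.
\end{align*}

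Next I would appeal to the analytic continuation of $J_{\boldsymbol{\alpha}}$ to $\mathbb{C}_{-1/\tau_\alpha}$. The twisted sum $\widetilde{J}_{\alpha,q}$ is a sawtooth Dirichlet series along the same orbit $\{\alpha n\}$ with only an additive phase shift inside the sawtooth, so the same Hardy--Littlewood/Hecke argument based on the Ostrowski expansion of $n$ relative to the continued fraction of $\alpha$ gives it the same continuation. Both sawtooth pieces are holomorphic at $s=1$, hence the only pole of $g_{\alpha q}$ in $\mathbb{C}_{-1/\tau_\alpha}$ is the simple pole inherited from $\zeta$, with residue $\{\alpha q\}$ as asserted.

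For the natural-boundary claim when $\tau_\alpha>1$, I would use the decomposition in reverse: any hypothetical continuation of $g_{\alpha q}$ across a point of $-\tfrac1{\tau_\alpha}+i\mathbb{R}$, combined with the continuation of $\widetilde{J}_{\alpha,q}$, would force $J_{\boldsymbol{\alpha}}(s)=\{\alpha q\}\zeta(s)-g_{\alpha q}(s)+\widetilde{J}_{\alpha,q}(s)$ to continue across the same point, contradicting the natural boundary of $J_{\boldsymbol{\alpha}}$. For the quadratic case, the continued fraction of $\alpha$ is eventually periodic with fundamental unit $\eta_\alpha$, so Hecke's transformation $\alpha\mapsto 1/\{\alpha\}$ becomes a genuine functional equation that, iterated, continues $J_{\boldsymbol{\alpha}}$ and $\widetilde{J}_{\alpha,q}$ meromorphically to all of $\mathbb{C}$ with at most simple poles on the lattice $s=-k\pm\tfrac{2\pi i n}{\log\eta_\alpha}$; substituting into the decomposition transfers the pole structure to $g_{\alpha q}$.

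The main technical obstacle is the analytic continuation and, for $\tau_\alpha>1$, the sharp natural-boundary statement for $J_{\boldsymbol{\alpha}}$ and its twisted analog $\widetilde{J}_{\alpha,q}$; once these are available, the floor-function identity above turns Theorem \ref{gaq} into residue and pole bookkeeping.
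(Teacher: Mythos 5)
Your opening identity is exactly the paper's starting point: $\mathbf{1}_{\{\alpha n\}<\{\alpha q\}}=R(\alpha n-\alpha q)-R(\alpha n)+\{\alpha q\}$, and your $\widetilde{J}_{\alpha,q}$ is, up to a Dirichlet polynomial, the shifted series $\mathcal{F}^qJ_{\boldsymbol\alpha}(s)=\sum_{n>q}R(\alpha(n-q))n^{-s}$. The gap is in the next step: you ``appeal to the analytic continuation of $J_{\boldsymbol\alpha}$ to $\mathbb{C}_{-1/\tau_\alpha}$,'' but no such continuation exists. The classical results of Hardy--Littlewood, Hecke and Ostrowski quoted in the paper give $\sigma_\mu(J_{\boldsymbol\alpha})=1-\frac{1}{\tau_\alpha}$ when $\tau_\alpha>1$, so $J_{\boldsymbol\alpha}$ (and likewise your phase-shifted $\widetilde{J}_{\alpha,q}$, which has the same abscissas) stops a full unit to the \emph{right} of the line $-\frac{1}{\tau_\alpha}+i\mathbb{R}$ claimed in the theorem. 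The entire content of Theorem \ref{gaq} is this extra unit, and it cannot be obtained by continuing the two sawtooth pieces separately: it comes from a cancellation \emph{between} them. Writing $R(\alpha(n-q))n^{-s}=R(\alpha m)(m+q)^{-s}=R(\alpha m)m^{-s}(1+q/m)^{-s}$ and expanding $(1+q/m)^{-s}$ binomially, the $m=0$ term of the expansion reproduces $J_{\boldsymbol\alpha}(s)$ up to a Dirichlet polynomial and is annihilated by the $-J_{\boldsymbol\alpha}(s)$ in your decomposition; what survives is $\sum_{m\geq1}q^m\binom{-s}{m}\phi_{q,m}(s)$ with $\phi_{q,m}(s)=\sum_{n>q}R(\alpha n)n^{-s-m}$, and each $\phi_{q,m}$ with $m\geq1$ lives on $\mathbb{C}_{\sigma_\mu(J_{\boldsymbol\alpha})-m}\subseteq\mathbb{C}_{\sigma_\mu(J_{\boldsymbol\alpha})-1}$. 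The paper's Lemma \ref{f.l.} supplies the locally uniform convergence of this series of meromorphic functions; that is the missing engine of the proof.

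The same off-by-one defect breaks your converse (natural boundary) argument: you cannot solve for $J_{\boldsymbol\alpha}$ near $-\frac{1}{\tau_\alpha}+i\mathbb{R}$ because $\widetilde{J}_{\alpha,q}$ is not defined there either. The paper instead argues within the expansion: if $\sigma_\mu(g_{\alpha q})<\sigma_\mu(J_{\boldsymbol\alpha})-1$, then solving for the $m=1$ term $sq\,\phi_{q,1}(s)$ (whose $\sigma_\mu$ is exactly $\sigma_\mu(J_{\boldsymbol\alpha})-1$) in terms of $g_{\alpha q}$ and the $m\geq2$ tail, which converges on $\mathbb{C}_{\sigma_\mu(J_{\boldsymbol\alpha})-2}$, yields a contradiction. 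Your residue computation and the pole bookkeeping for the quadratic case are fine once the correct continuation mechanism is in place, since the poles of the $\phi_{q,m}$ are those of $J_{\boldsymbol\alpha}$ shifted by $m\geq1$; but as written the proposal assumes the conclusion's strength for the ingredients and therefore does not prove the theorem.
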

Our aim is to study two more Dirichlet series which are, ultimately, related to $g_{\alpha q}.$ Let
\begin{align*}
B(\alpha)=\lbrace\lfloor\alpha n\rfloor:n\in\mathbb{N}\rbrace\hspace*{0.5cm}\text{and}\hspace*{0.5cm}
S(\alpha)=\left\{\left\lceil\beta n\right\rceil-\left\lceil\beta(n-1)\right\rceil:n\in\mathbb{N}\right\},
\end{align*}
be the homogeneous Beatty sequence and the Sturmian sequence associated with $\alpha$, respectively, where $\alpha$ is a real number greater than 1 and $\beta$ the reciprocal of $\alpha.$  Denote by
\begin{align*}
\zeta_\alpha(s):=\mathlarger\sum\limits_{n=1}^\infty\dfrac{1}{\lfloor\alpha n\rfloor^s}\text{\hspace*{0.5cm}and\hspace*{0.5cm}}S_\alpha(s):=\mathlarger\sum\limits_{n=1}^{\infty}\dfrac{\left\lceil\beta n\right\rceil-\left\lceil\beta(n-1)\right\rceil}{n^s},
\end{align*}
both converging for $s\in\mathbb{C}_1,$ the corresponding Beatty Zeta-function and Sturmian Dirichlet series. Here  $\lfloor x\rfloor$ is the largest integer which is smaller than or equal to the real number $x,$ while $\lceil x\rceil$ is the smallest integer which is greater than or equal to $x.$

From Theorem \ref{gaq}, we will derive
\begin{theorem}\label{BS}For every irrational $\alpha>1$ the Dirichlet series $\zeta_\alpha$ and $S_\alpha$ can be continued analytically to $\mathbb{C}_{-\frac{1}{\tau_{\alpha}}}$ except for a simple pole at $s=1$ with residue $\beta.$ If $\tau_\alpha>1,$ the vertical line $-\dfrac{1}{\tau_\alpha}+i\mathbb{R}$ is the natural boundary for $\zeta_\alpha$ and $S_\alpha.$ 
\newline
In the special case where $\alpha$ is a positive quadratic irrational, both $\zeta_\alpha$ and $S_\alpha$ have a meromorphic continuation to the whole complex plane with a simple pole at $s=1$ and at most simple poles at $s=-k\pm\dfrac{2\pi i n}{\log\eta_\alpha},$ $(k,n)\in\mathbb{N}\times\mathbb{N}_0.$
\end{theorem}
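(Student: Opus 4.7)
The plan is to reduce both $\zeta_\alpha$ and $S_\alpha$ to the series $J_{\boldsymbol{\alpha}}$ up to standard factors of the Riemann zeta function and a holomorphic remainder, and then to invoke the analytic continuation of $J_{\boldsymbol{\alpha}}$ (which should follow from Theorem~\ref{gaq} via classical Hecke/Hardy--Littlewood techniques). Throughout I use the identity $\sum_{n\geq 1}\{\alpha n\}n^{-s-1}=J_{\boldsymbol{\alpha}}(s+1)+\tfrac12\zeta(s+1)$, which is immediate from the definition of $J_{\boldsymbol{\alpha}}$.

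For $\zeta_\alpha$, I would write $\lfloor\alpha n\rfloor=\alpha n-\{\alpha n\}$ and apply the binomial expansion
\[
\lfloor\alpha n\rfloor^{-s}=(\alpha n)^{-s}+s\{\alpha n\}(\alpha n)^{-s-1}+O_s\bigl(n^{-\mathrm{Re}(s)-2}\bigr),
\]
the $O_s$-term being summable uniformly on compacta of $\mathbb{C}_{-1}$. Summing over $n$ yields
\[
\zeta_\alpha(s)=\alpha^{-s}\zeta(s)+s\alpha^{-s-1}J_{\boldsymbol{\alpha}}(s+1)+\tfrac{s}{2}\alpha^{-s-1}\zeta(s+1)+R_1(s),
\]
with $R_1$ holomorphic in $\mathbb{C}_{-1}$. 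For $S_\alpha$, the relation $\lceil\beta n\rceil=\lfloor\beta n\rfloor+1$ (valid since $\beta$ is irrational) shows that $\lceil\beta n\rceil-\lceil\beta(n-1)\rceil$ equals $1$ precisely when $n=1$ or $n=\lceil k\alpha\rceil$ for some $k\geq 1$, so $S_\alpha(s)=1+\sum_{k\geq 1}\lceil k\alpha\rceil^{-s}$. Writing $\lceil k\alpha\rceil=\alpha k+(1-\{\alpha k\})$ and repeating the expansion gives
\[
S_\alpha(s)=1+\alpha^{-s}\zeta(s)+s\alpha^{-s-1}J_{\boldsymbol{\alpha}}(s+1)-\tfrac{s}{2}\alpha^{-s-1}\zeta(s+1)+R_2(s),
\]
with $R_2$ holomorphic in $\mathbb{C}_{-1}$.

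It remains to continue $J_{\boldsymbol{\alpha}}$ to $\mathbb{C}_{1-1/\tau_\alpha}$, with natural boundary $\mathrm{Re}(s)=1-1/\tau_\alpha$ when $\tau_\alpha>1$ and meromorphic continuation to all of $\mathbb{C}$ (with the stated polar lattice) when $\alpha$ is a quadratic irrational. I would expect this bridge between Theorem~\ref{gaq} and Theorem~\ref{BS} to be established earlier in the paper, by writing $J_{\boldsymbol{\alpha}}$ as a suitable combination of Hecke series $g_{\alpha q}$ whose $s=1$ poles cancel. Granted this, the substitution $s\mapsto s+1$ places $J_{\boldsymbol{\alpha}}(s+1)$ in $\mathbb{C}_{-1/\tau_\alpha}$; the apparent pole of $\zeta(s+1)$ at $s=0$ is killed by the factor $s$; and $\alpha^{-s}\zeta(s)$ supplies the unique simple pole at $s=1$ with residue $\alpha^{-1}=\beta$ for both $\zeta_\alpha$ and $S_\alpha$. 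The natural boundary $\mathrm{Re}(s)=-1/\tau_\alpha$ is inherited from $J_{\boldsymbol{\alpha}}(s+1)$, and in the quadratic case the poles of $J_{\boldsymbol{\alpha}}$ at $-k\pm 2\pi in/\log\eta_\alpha$ with $k\in\mathbb{N}_0$ shift by $1$ to the asserted poles with $k\in\mathbb{N}$. The principal obstacle is therefore the $J_{\boldsymbol{\alpha}}$-step: engineering a combination of $g_{\alpha q}$'s whose $s=1$ poles cancel exactly while keeping enough control on the remainder to retain the natural boundary; once this is secured, the deduction of Theorem~\ref{BS} is routine Taylor-expansion bookkeeping.
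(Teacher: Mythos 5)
Your route is genuinely different from the paper's. The paper never expands $\lfloor\alpha n\rfloor^{-s}$ around $(\alpha n)^{-s}$; instead it uses Beatty's theorem to show that $\mathcal{F}\zeta_\alpha(s)=\sum_n(\lfloor\alpha n\rfloor+1)^{-s}$ is \emph{exactly} the Hecke series $g_{\beta}$ with $\beta=1/\alpha$ and $q=1$ (the integers $\lfloor\alpha n\rfloor+1$ are precisely those $m$ with $\{\beta m\}<\beta$), and likewise $S_\alpha-1=g_{\beta}$; Theorem \ref{gaq} then does all the work, with the shift by one unit coming from the integer translation $n\mapsto n\pm q$ handled by Lemmas \ref{f.l.} and \ref{f.t.}. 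Note also that your expected ``bridge'' runs backwards: the paper does not build $J_{\boldsymbol{\alpha}}$ out of the $g_{\alpha q}$; it quotes the continuation of $J_{\boldsymbol{\alpha}}$ from Hardy--Littlewood/Hecke/Ostrowski and derives $g_{\alpha q}$ \emph{from} it. That said, your two-term expansion is sound where it applies: the tail of the binomial series is indeed $O_s(n^{-\sigma-2})$ uniformly on compacta, so $R_1,R_2$ are holomorphic in $\mathbb{C}_{-1}$, the residue $\beta$ at $s=1$ comes out of $\alpha^{-s}\zeta(s)$, and since $-1/\tau_\alpha\geq-1$ your identity does yield the continuation to $\mathbb{C}_{-1/\tau_\alpha}$ and (solving for $sJ_{\boldsymbol{\alpha}}(s+1)$) the natural boundary when $\tau_\alpha>1$. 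So the first half of the theorem survives by a different and arguably more direct argument.

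The genuine gap is the quadratic case. Your remainder is holomorphic only in $\mathbb{C}_{-1}$, so the identity stops at $\mathrm{Re}(s)=-1$ and cannot give the meromorphic continuation to all of $\mathbb{C}$. To push past $-1$ you must keep the higher binomial terms, and the $m$-th term is $\binom{-s}{m}(-1)^m\alpha^{-s-m}\sum_n\{\alpha n\}^m n^{-s-m}$: for $m\geq2$ this introduces the Dirichlet series of the \emph{powers} $\{\alpha n\}^m$, which are new objects not expressible through $J_{\boldsymbol{\alpha}}$ or $\zeta$ alone, and whose continuation is neither supplied by Theorem \ref{gaq} nor by the classical results quoted in Section \ref{Diop.appr.}. (This is exactly what the paper's reduction avoids: shifting by an \emph{integer} $q$ keeps every term of the expansion inside the family $\phi_{q,m}(s)=\mathcal{F}^q\mathcal{B}^qJ_{\boldsymbol{\alpha}}(s+m)$.) Relatedly, your accounting of the poles is off: shifting $E(J_{\boldsymbol{\alpha}})\subseteq\{-2k\pm 2\pi in/\log\eta_\alpha\}$ by $1$ only produces odd negative real parts, whereas the full set $\{-k:k\in\mathbb{N}\}$ in the statement arises from the shifts by all $m\in\mathbb{N}$ in the paper's argument. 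To repair your proof of the quadratic case you would either need the continuation of $\sum_n\{\alpha n\}^m n^{-s}$ for all $m$, or you should switch to the exact identity $\mathcal{F}\zeta_\alpha=g_{1/\alpha}$ and let Theorem \ref{gaq} carry the load.
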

For the sake of completeness, we will discuss also  the case $\alpha\in(0,1]\cup\mathbb{Q}_+.$ Lastly, we prove that the Hurwitz Zeta-function $\zeta(s;\gamma)$ associated with the Beatty sequence $ \lbrace\lfloor\alpha n\rfloor:n\in\mathbb{N}\rbrace,$ that is
\begin{align*}
\zeta_\alpha(s;\gamma)=\mathlarger\sum\limits_{n=0}^{\infty}\dfrac{1}{(\lfloor\alpha n\rfloor+\gamma)^s},
\end{align*}
where $(\alpha,\gamma)\in(0,+\infty)\times(0,1]$ is fixed and the series converges for $s\in\mathbb{C}_1,$ can also be continued meromorphically beyond the imaginary axis.

\section{Auxiliary Lemmas}
We consider the Banach space 
\begin{align*}\ell^{\infty}:=\left\{\omega=(a_n)_{n\in\mathbb{N}}\in\mathbb{C}^\mathbb{N}:\sup\limits_{n\in\mathbb{N}}|a_n|<\infty\right\},
\end{align*}
endowed with the sup-norm $||\cdot||_\infty$ and the class of ordinary Dirichlet series which are generated from elements of $\ell^\infty$ 
\begin{align*}
\mathcal{D}:=\left\{f_\omega(s):=\mathlarger\sum\limits_{n=1}^{\infty}\dfrac{a_n}{n^s}:\omega=(a_n)_{n\in\mathbb{N}}\in\ell^\infty\right\}.
\end{align*} 
The first lemma, though simple, is of significance for the rest of the results given afterwards. Its proof is inspired by \cite{navas2001analytic}, where the binomial identity
\begin{align*}
(1+z)^{w}=\mathlarger\sum\limits_{m=0}^{\infty}{\binom{w}{ m}}z^m,
\end{align*} 
valid for every $z,w\in\mathbb{C}$ with $|z|<1,$ is used to prove meromorphic continuation for the Fibonacci Dirichlet series.
\begin{lemma}\label{f.l.}If $\mathbb{\omega}=(a_n)_{n\in\mathbb{N}}\in\ell^{\infty},$  $m_0\in\mathbb{N}_0,$ $q\in\mathbb{N}$ and  $j\in\lbrace-1,1\rbrace,$ then  the double series
\begin{align}\label{double}
\mathlarger\sum\limits_{n=q+1}^{\infty}\mathlarger\sum\limits_{m=m_0}^{\infty}(jq)^{m}{\binom{-s}{ m}}\dfrac{a_n}{n^{s+m}}
\end{align}
is absolutely convergent for any $s\in\mathbb{C}_{1-m_0}.$
\end{lemma}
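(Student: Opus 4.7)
The plan is to verify absolute convergence directly. First I would reduce to showing
\begin{align*}
\sum_{n=q+1}^{\infty}\sum_{m=m_0}^{\infty} q^m \left|\binom{-s}{m}\right| \frac{1}{n^{\mathrm{Re}(s)+m}} < \infty,
\end{align*}
by using $|j|=1$ and $|a_n|\leq\|\omega\|_\infty$. Since all summands are nonnegative, Tonelli's theorem permits summing in whichever order is convenient, and I would first carry out the inner sum over $m$ for each fixed $n$.

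The key observation is that the restriction $n\geq q+1$ forces $z_n:=q/n\leq q/(q+1)<1$, so $z_n$ lies strictly inside the disc of convergence of the generalized binomial series. To exploit this quantitatively I would factor out the leading power of $z_n$:
\begin{align*}
\sum_{m=m_0}^{\infty}\left|\binom{-s}{m}\right| z_n^m = z_n^{m_0}\sum_{k=0}^{\infty}\left|\binom{-s}{k+m_0}\right| z_n^k \leq C(s,q,m_0)\, z_n^{m_0},
\end{align*}
where $C(s,q,m_0):=\sum_{k=0}^{\infty}\bigl|\binom{-s}{k+m_0}\bigr|\bigl(q/(q+1)\bigr)^k$ is finite by the ratio test, since the ratio of consecutive terms tends to $q/(q+1)<1$ as $k\to\infty$.

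Substituting back, the full double sum is bounded by
\begin{align*}
\|\omega\|_\infty\, C(s,q,m_0)\, q^{m_0}\sum_{n=q+1}^{\infty} \frac{1}{n^{\mathrm{Re}(s)+m_0}},
\end{align*}
which is a convergent $p$-series tail precisely when $\mathrm{Re}(s)+m_0>1$, i.e.\ $s\in\mathbb{C}_{1-m_0}$. The only step requiring real care is the factorisation extracting $(q/n)^{m_0}$: this decay is exactly what compensates for the missing $m_0$ in the exponent of $n$ demanded by the hypothesis, so the condition $n\geq q+1$ in the outer sum plays a double role, both keeping $z_n<1$ and providing this critical additional power. Beyond this bookkeeping I do not anticipate any genuine obstacle.
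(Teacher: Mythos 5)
Your proof is correct and follows essentially the same route as the paper: both arguments hinge on writing $q^m n^{-m}=(q/n)^{m_0}(q/n)^{m-m_0}$, using $n\geq q+1$ both to dominate the tail by the value at $q/(q+1)<1$ and to extract the extra factor $n^{-m_0}$ that turns the $n$-sum into $\zeta(\sigma+m_0)$. The only (harmless) difference is in how the $m$-sum is bounded uniformly: the paper uses the comparison $\bigl|\binom{-s}{m}\bigr|\leq(-1)^m\binom{-|s|}{m}$ together with the closed-form binomial series to get the explicit constant $(q+1)^{m_0+|s|}$, whereas you settle for finiteness of $C(s,q,m_0)$ via the ratio test.
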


\begin{proof}
Observe that, for any $s\in\mathbb{C},$ any $j\in\lbrace-1,1\rbrace$ and any $m\in\mathbb{N},$
\begin{align*}
\left|j^{m}{\binom{-s}{ m}}\right|&=\left|\dfrac{(-s)(-s-1)\dots(-s-m+1)}{m!}\right|\\&\leq\dfrac{|s|(|s|+1)\dots(|s|+m-1)}{m!}\\
&=(-1)^m\dfrac{(-|s|)(-|s|-1)\dots(-|s|-m+1)}{m!}\\
&=(-1)^m{\binom{-|s|}{ m}}.
\end{align*}
Thus, for any $s\in\mathbb{C}$ with $\sigma:=\text{Re}(s)>1-m_0,$
\begin{align*}
\mathlarger\sum\limits_{n=q+1}^{\infty}&\mathlarger\sum\limits_{m=m_0}^{\infty}\left|(jq)^{m}{\binom{-s}{ m}}\dfrac{a_n}{n^{s+m}}\right|\\
&\leq\mathlarger\sum\limits_{n=q+1}^{\infty}\mathlarger\sum\limits_{m=m_0}^{\infty}(-q)^m{\binom{-|s|}{ m}}\dfrac{|a_n|}{n^{\sigma+m}}\\
&\leq ||\omega||_\infty\mathlarger\sum\limits_{n=q+1}^{\infty}\dfrac{1}{n^{\sigma+m_0}}\mathlarger\sum\limits_{m=m_0}^{\infty}(-q)^m{\binom{-|s|}{ m}}\dfrac{1}{n^{m-m_0}}\\
&\leq ||\omega||_\infty\mathlarger\sum\limits_{n=q+1}^{\infty}\dfrac{1}{n^{\sigma+m_0}}\mathlarger\sum\limits_{m=m_0}^{\infty}(-q)^m{\binom{-|s|}{ m}}\dfrac{1}{(q+1)^{m-m_0}}\\
&\leq ||\omega||_\infty(q+1)^{m_0}\mathlarger\sum\limits_{n=1}^{\infty}\dfrac{1}{n^{\sigma+m_0}}\mathlarger\sum\limits_{m=0}^{\infty}{\binom{-|s|}{ m}}\left(-\dfrac{q}{q+1}\right)^{m}\\
&\leq ||\omega||_\infty(q+1)^{m_0}\zeta(\sigma+m_0)\left(1-\dfrac{q}{q+1}\right)^{-|s|}\\
&=||\omega||_\infty(q+1)^{m_0+|s|}\zeta(\sigma+m_0).
\end{align*}
 Since $\zeta(s)$ converges in $\mathbb{C}_1$, this proves the lemma.
\end{proof}

Let $F,B:\ell^{\infty}\to\ell^{\infty}$ be the forward and the backward shift on $\ell^{\infty}$ 
\begin{align*}
F(a_1,a_2,\dots)=(0,a_1,a_2,\dots)\hspace*{1cm}\text{and}\hspace*{1cm}B(a_1,a_2,\dots)=(a_2,a_3,\dots)
\end{align*} 
for every $\omega=(a_n)_{n\in\mathbb{N}}\in\ell^\infty.$ It is natural to define the corresponding shifts in $\mathcal{D}$ as $\mathcal{F},\mathcal{B}:\mathcal{D}\to\mathcal{D}$ given by
\begin{align*}
\mathcal{F}f_\omega=f_{F\omega}\hspace*{1cm}\text{and}\hspace*{1cm}\mathcal{B}f_\omega=f_{B\omega},
\end{align*}
for every $\omega\in\ell^\infty.$ Obviously, $\mathcal{F}$ and $\mathcal{B}$ are linear maps and, as it will be shown in Lemma \ref{f.t.}, their operation on a Dirichlet series produces another Dirichlet series with the same abscissas as the original one. Before moving on we need to introduce some notation. From this point forward,  $E(f)$ will denote the set of points in $\mathbb{C}_{\sigma_\mu(f)}$ which are poles of $f,$ while  $$E\left(f\right)-m=\left\{s-m:s\in E(f)\right\},$$ for every $m\in\mathbb{N}.$ 

\begin{lemma} \label{f.t.}For every $p,q\in\mathbb{N},$ $b\in\lbrace a,c,\mu\rbrace$ and $\omega\in\ell^{\infty},$ we have that
 \begin{align*}\sigma_b(\mathcal{F}^pf_\omega)=\sigma_b(\mathcal{B}^qf_\omega)=\sigma_b(f_\omega)
 \end{align*}
 and 
 \begin{align*}E(\mathcal{F}^pf_\omega)\cup E(\mathcal{B}^qf_\omega)\subseteq\bigcup\limits_{m=0}^{\infty}\left(E(f_\omega)-m\right).
 \end{align*}
\end{lemma}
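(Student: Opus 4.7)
The plan is to represent $\mathcal{F}^p f_\omega$ and $\mathcal{B}^q f_\omega$ explicitly and then feed the binomial expansion $(n\pm c)^{-s}=\sum_{m\geq 0}(\mp c)^m\binom{-s}{m}n^{-s-m}$, together with Lemma \ref{f.l.}, to rewrite each as a meromorphic combination of shifted copies of $f_\omega$ plus a remainder that is holomorphic in an arbitrarily large half-plane.

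Unfolding the definitions gives $\mathcal{F}^p f_\omega(s)=\sum_{n\geq 1}a_n(n+p)^{-s}$ and, after a harmless reindex, $\mathcal{B}^q f_\omega(s)=\sum_{k>q}a_k(k-q)^{-s}$. The factors $(1+p/n)^{-s}$ and $(1-q/k)^{-s}$ stay bounded above and below as the index grows, so a direct comparison with $f_\omega$ yields $\sigma_a(\mathcal{F}^p f_\omega)=\sigma_a(\mathcal{B}^q f_\omega)=\sigma_a(f_\omega)$ and identically for $\sigma_c$. The real content of the lemma is therefore the statement about $\sigma_\mu$ and the pole set.

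I treat $\mathcal{F}^p$ first. The binomial identity $(n+p)^{-s}=\sum_{m\geq 0}\binom{-s}{m}p^m n^{-s-m}$ converges for $n>p$, so for $s\in\mathbb{C}_1$
\[
\mathcal{F}^p f_\omega(s)=\sum_{n=1}^{p}\frac{a_n}{(n+p)^s}+\sum_{n>p}\sum_{m\geq 0}\binom{-s}{m}\frac{p^m a_n}{n^{s+m}}.
\]
Fixing $M\in\mathbb{N}$ and splitting the inner sum at $m=M$, Lemma \ref{f.l.} with $m_0=0$, $j=1$ justifies a Fubini interchange on the whole double sum in $\mathbb{C}_1$, while the same lemma with $m_0=M$ shows that the tail $R_M(s)$ over $m\geq M$ converges absolutely, and locally uniformly, on $\mathbb{C}_{1-M}$, so $R_M$ is holomorphic there. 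The leading block recognises as $\sum_{m=0}^{M-1}\binom{-s}{m}p^m\bigl(f_\omega(s+m)-\sum_{n=1}^{p}a_n n^{-s-m}\bigr)$, yielding
\[
\mathcal{F}^p f_\omega(s)=E_p(s)+\sum_{m=0}^{M-1}\binom{-s}{m}p^m f_\omega(s+m)+R_M(s),
\]
with $E_p$ entire. Since $f_\omega(s+m)$ is meromorphic on $\mathbb{C}_{\sigma_\mu(f_\omega)-m}$ with poles at $E(f_\omega)-m$, the right-hand side is meromorphic on $\mathbb{C}_{\max(1-M,\,\sigma_\mu(f_\omega))}$ with poles contained in $\bigcup_{m=0}^{M-1}(E(f_\omega)-m)$. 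Letting $M\to\infty$ delivers $\sigma_\mu(\mathcal{F}^p f_\omega)\leq\sigma_\mu(f_\omega)$ together with the desired pole inclusion. The case of $\mathcal{B}^q$ is symmetric, starting from $(k-q)^{-s}=\sum_{m\geq 0}(-q)^m\binom{-s}{m}k^{-s-m}$ and invoking Lemma \ref{f.l.} with $j=-1$.

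For the reverse inequality I use $BF=\mathrm{id}_{\ell^\infty}$, which gives $\mathcal{B}^p\mathcal{F}^p f_\omega=f_\omega$, so the bound just obtained applied to $\mathcal{F}^p f_\omega$ yields $\sigma_\mu(f_\omega)\leq\sigma_\mu(\mathcal{F}^p f_\omega)$. For $\mathcal{B}^q$ the composition $FB$ is not the identity, but $\mathcal{F}^q\mathcal{B}^q f_\omega(s)=f_\omega(s)-\sum_{n=1}^{q}a_n n^{-s}$ differs from $f_\omega$ by an entire function, so chaining with the previous inequality supplies the matching lower bound. The main obstacle is the bookkeeping in the central identity: one must justify the double-sum interchange, for which Lemma \ref{f.l.} is tailor-made, and then extend the identification of the inner sum with $f_\omega(s+m)-\sum_{n=1}^{p}a_n n^{-s-m}$, originally proved in $\mathbb{C}_1$, meromorphically to $\mathbb{C}_{1-M}$ by analytic continuation. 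The pole inclusion itself is then immediate because only an inclusion is claimed and no potential cancellations need to be ruled out.
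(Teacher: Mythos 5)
Your argument is correct and follows essentially the same route as the paper: binomial expansion of $(n\pm c)^{-s}$, Lemma \ref{f.l.} to justify the interchange and to make the tail holomorphic in an arbitrarily large half-plane, identification of the coefficient functions with $f_\omega(s+m)$ up to Dirichlet polynomials, and the composition identities $\mathcal{B}^p\mathcal{F}^p=\mathrm{id}$, $\mathcal{F}^q\mathcal{B}^qf_\omega=f_\omega-\sum_{n\le q}a_nn^{-s}$ for the reverse inequality on $\sigma_\mu$. The only cosmetic difference is that you truncate at a finite $M$ and let $M\to\infty$ where the paper works on compact subsets with a suitable $m_0$; just note that for $\sigma_c$ the bounded factor $(1\pm c/n)^{-s}$ alone does not preserve conditional convergence --- one should invoke Cahen's formula (or bounded variation of that factor), as the paper does.
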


\begin{proof}
Let $\omega=(a_n)_{n\in\mathbb{N}}\in\ell^{\infty}$ and $f_\omega\in\mathcal{D}$ be the corresponding Dirichlet series. Since
\begin{align*}\mathcal{B}^qf_\omega(s)=f_{B^q\omega}(s)=\mathlarger\sum\limits_{n=1}^{\infty}\dfrac{a_{n+q}}{n^s}\hspace*{0.5cm}\text{and}\hspace*{0.5cm}\mathcal{F}^pf_\omega(s)=f_{F^p\omega}(s)=\mathlarger\sum\limits_{n=p+1}^{\infty}\dfrac{a_{n-p}}{n^s},
\end{align*}
the Cahen's formulas for the abscissas of ordinary Dirichlet series (see for example in \cite[Chapter IX]{titchmarsh1939theory}) yield
\begin{align*}\sigma_a(\mathcal{F}^pf_\omega)=\sigma_a(\mathcal{B}^qf_\omega)=\sigma_a(f_\omega)\leq1
\hspace*{0.4cm}\text{and}\hspace*{0.4cm}\sigma_c(\mathcal{F}^pf_\omega)=\sigma_c(\mathcal{B}^qf_\omega)=\sigma_c(f_\omega)
.\end{align*}
By applying the binomial identity, one has
\begin{align*}
\mathcal{B}^qf_\omega(s)=\mathlarger\sum\limits_{n=1}^{\infty}\dfrac{a_{n+q}}{(n+q)^s}\left(1-\dfrac{q}{n+q}\right)^{-s}=\mathlarger\sum\limits_{n=q+1}^{\infty}\dfrac{a_n}{n^s}\mathlarger\sum\limits_{m=0}^{\infty}{\binom{-s}{ m}}\left(-\dfrac{q}{n}\right)^m
.\end{align*}
The double series
\begin{align*}
\mathlarger\sum\limits_{n=q+1}^{\infty}\mathlarger\sum\limits_{m=0}^{\infty}(-q)^{m}{\binom{-s}{ m}}\dfrac{a_n}{n^{s+m}}
\end{align*}
is absolutely convergent in $\mathbb{C}_{1},$ by Lemma \ref{f.l.}. Therefore, for every $s\in\mathbb{C}_1,$
\begin{align}\label{B}
\mathcal{B}^qf_\omega(s)
=\mathlarger\sum\limits_{m=0}^{\infty}\mathlarger\sum\limits_{n=q+1}^{\infty}(-q)^{m}{\binom{-s}{ m}}\dfrac{a_n}{n^{s+m}}
=\mathlarger\sum\limits_{m=0}^{\infty}(-q)^{m}{\binom{-s}{ m}}\phi_{q,m}(s),
\end{align}
where
\begin{align*}
\phi_{q,m}(s)=\mathlarger\sum\limits_{n=q+1}^\infty\dfrac{a_n}{n^{s+m}}=\mathcal{F}^q\mathcal{B}^qf_\omega(s+m)
\end{align*}
for every ${(q,m)\in\mathbb{N}\times\mathbb{N}_0}.$ Hence,
\begin{align*}\sigma_a(\phi_{q,m})=\sigma_a(f_\omega)-m\text{\hspace*{0.5cm} and \hspace*{0.5cm}}\sigma_\mu(\phi_{q,m})=\sigma_\mu(f_\omega)-m,
\end{align*}
which implies that $ \phi_{q,m},$ ${(q,m)\in\mathbb{N}\times\mathbb{N}_0},$ are meromorphic functions in $\mathbb{C}_{\sigma_\mu(f_\omega)},$ with 
\begin{align*}E(\phi_{q,m})=E(f_\omega)-m.
\end{align*}
Similarly,
\begin{align}
\mathcal{F}^pf_\omega(s)
&=\mathlarger\sum\limits_{n=p+1}^{2p}\dfrac{a_{n-p}}{n^s}+\mathlarger\sum\limits_{n=2p+1}^{\infty}\dfrac{a_{n-p}}{(n-p)^s}\left(1+\dfrac{p}{n-p}\right)^{-s}\tag*{}\\
&=\mathlarger\sum\limits_{n=p+1}^{2p}\dfrac{a_{n-p}}{n^s}+\mathlarger\sum\limits_{n=p+1}^{\infty}\dfrac{a_{n}}{n^s}\left(1+\dfrac{p}{n}\right)^{-s}\tag*{}\\
\label{F}&=\mathlarger\sum\limits_{n=p+1}^{2p}\dfrac{a_{n-p}}{n^s}+\mathlarger\sum\limits_{m=0}^{\infty}p^m{\binom{-s}{ m}}\phi_{p,m}(s).
\end{align}
Assume that $K$  is a compact subset of $\mathbb{C}_{\sigma_\mu(f)}.$ Then, there exists an $m_0\in\mathbb{N}_0,$ such that
\begin{align*}
1-m_0<\min\limits_{s\in K}\text{Re}(s).
\end{align*}
Moreover, the meromorphic functions $\phi_{q,m},$ $m\geq m_0,$ have no poles in $K$ and the series 
\begin{align*}
\mathlarger\sum\limits_{m=m_0}^{\infty}(jq)^{m}{\binom{-s}{ m}}\phi_{q,m}(s)
\end{align*} 
for $j\in\lbrace-1,1\rbrace$ is uniformly convergent on $K,$ by Lemma \ref{f.l.}.

Thus, $\mathcal{F}^pf_\omega $ and $\mathcal{B}^qf_\omega$ can be continued  meromorphically to $\mathbb{C}_{\sigma_\mu(f_\omega)}$ with
\begin{align*}
E(\mathcal{F}^pf_\omega)\cup E(\mathcal{B}^qf_\omega)\subseteq\bigcup\limits_{m=0}^{\infty} \left(E(\phi_{p,m})\cup E(\phi_{q,m})\right)=\bigcup\limits_{m=0}^{\infty}\left(E(f_\omega)-m\right)
\end{align*}
and 
\begin{align*}\max\lbrace\sigma_\mu(\mathcal{F}^pf_\omega),\sigma_\mu(\mathcal{B}^qf_\omega)\rbrace\leq\sigma_\mu(f_\omega).
\end{align*}
From the preceding inequality and the relation 
\begin{align*}
\mathcal{B}^p\mathcal{F}^pf_\omega(s)=\mathcal{F}^q\mathcal{B}^qf_\omega(s)+\mathlarger\sum\limits_{n=1}^{q}\dfrac{a_n}{n^s}=f_\omega(s),
\end{align*}
the inequality \begin{align*}\sigma_\mu(f_\omega)\leq\min\lbrace\sigma_\mu(\mathcal{F}^pf_\omega),\sigma_\mu(\mathcal{B}^qf_\omega)\rbrace
\end{align*} 
follows and the proof of the lemma is complete.
\end{proof}

\section{Diophantine approximation and Dirichlet series}\label{Diop.appr.}
 Let $R:\mathbb{R}\to\mathbb{R}$ be the odd 1-periodic function 
\begin{align*}
R(x)=\left\{\begin{array}{ll}\lbrace x\rbrace-\dfrac{1}{2}&,x\notin\mathbb{Z}\\
0&,x\in\mathbb{Z}
\end{array}\right.,
\end{align*}
where $\left\{x\right\}$ denotes the fractional part of a real number $x.$

 If $\alpha$ is a positive irrational number, we set ${\boldsymbol\alpha}=\left(R(\alpha n)\right)_{n\in\mathbb{N}}$ and denote by
\begin{align*}
{J}_{\boldsymbol{\alpha}}(s)=\mathlarger\sum\limits_{n=1}^\infty\dfrac{R(\alpha n)}{n^s},
\end{align*}
 which converges for $s\in\mathbb{C}_1,$ the Dirichlet series generated from the sequence ${\boldsymbol\alpha}.$ Let also
\begin{align*}
\alpha:=[a_0,a_1,a_2,\dots],\hspace*{0.3cm}\dfrac{p_n}{q_n}:=[a_0,a_1,a_2,\dots,a_n]\text{\hspace*{0.3cm}and\hspace*{0.3cm}}\theta_n:=[a_{n},a_{n+1},\dots],
\end{align*}
be the continued fraction for $\alpha$, the $n$-th convergent and the $n$-th complete quotient of $\alpha$ , respectively, and define
\begin{align*}
\tau_\alpha:=\limsup\limits_{n\to\infty}\dfrac{\log q_{n+1}}{\log q_n}\in[1,\infty].
\end{align*}
The Dirichlet series $J_{\boldsymbol\alpha}$ was studied extensively in a series of papers by Behnke \cite{behnke1922verteilung}, Hardy and Littlewood \cite{hardy1922some, hardy1924some}, Hecke \cite{hecke1922analytische} and Ostrowski \cite{ostrowski1922bemerkungen}. Its analytic character depends very much on the arithmetical character of $\alpha,$ as the aforementioned authors showed. Indeed it has been proved that:
 
1) for every positive irrational $\alpha,$ we have $\sigma_{c}(J_{\boldsymbol{\alpha}})=1-\dfrac{1}{\tau_\alpha};$ 

2) if $\alpha$ is a positive irrational with $\tau_\alpha>1,$ then $\sigma_{\mu}(J_{\boldsymbol{\alpha}})=1-\dfrac{1}{\tau_\alpha};$ 

3) if $\alpha$ is a positive quadratic irrational, then
\begin{align*}
\sigma_\mu(J_{\boldsymbol\alpha})=-\infty\text{ \hspace*{0.2cm}and\hspace*{0.2cm} }E(J_{\boldsymbol\alpha})\subseteq\left\{-2k\pm\dfrac{2\pi i n}{\log\eta_\alpha}:(k,n)\in\mathbb{N}_0\times\mathbb{N}_0\right\},
\end{align*}
where
\begin{align*}
\eta_\alpha^{-1}:=\theta_{p+1}\theta_{p+2}\dots\theta_{p+q},
\end{align*}  
corresponding to $(a_p,a_{p+1},\dots,a_{p+q-1})$ being the periodic part of the continued fraction of $\alpha.$

By defintion, we know that $\sigma_\mu\leq\sigma_c\leq\sigma_a$. Thus, in order to prove Theorem \ref{gaq} it suffices to show that for any irrational $\alpha>0$ and $q\in\mathbb{Z}^*$,
 \begin{align*}\sigma_{\mu}(g_{\alpha q})=\sigma_{\mu}(J_{\boldsymbol\alpha})-1\text{\hspace*{0.5cm}and\hspace*{0.5cm}} 1\in E(g_{\alpha q})\subseteq\lbrace1\rbrace\cup\left(\bigcup\limits_{m=1}^{\infty}\left(E(J_{\boldsymbol\alpha})-m\right)\right),
 \end{align*}
 where the point $s=1$ will be a simple pole of $g_{\alpha q}$ with residue $\lbrace\alpha q\rbrace.$
 
\begin{proof}[Proof of Theorem \ref{gaq}]
If $\chi_{_S}$ denotes the characteristic function of a set $S,$ we
observe that for every $\alpha\in(0,1]$ and every $x\in(0,1)\setminus\lbrace\alpha\rbrace$ the following equality
\begin{align}\label{saw}
\chi_{_{[0,\alpha)}}(x)=R(x-\alpha)-R(x)+\alpha
\end{align}
holds.

For $q\in\mathbb{N},$ let $I=[0,\lbrace\alpha q\rbrace)$ and $I'=[0,\lbrace-\alpha q\rbrace).$ Then,
\begin{align*}
g_{\alpha q}(s)=\mathlarger\sum\limits_{n\in A_{\alpha q}}\dfrac{1}{n^s}=\mathlarger\sum\limits_{n=1}^{\infty}\dfrac{\chi_{_{I}}(\lbrace\alpha n\rbrace)}{n^s}=\underset{n\neq q}{\mathlarger\sum\limits_{n=1}^{\infty}}\dfrac{\chi_{_{I}}(\lbrace\alpha n\rbrace)}{n^s}
\end{align*}
and
\begin{align*}
g_{-\alpha q}(s)=\mathlarger\sum\limits_{n\in A_{-\alpha q}}\dfrac{1}{n^s}=\mathlarger\sum\limits_{n=1}^{\infty}\dfrac{\chi_{_{I'}}(\lbrace\alpha n\rbrace)}{n^s},
\end{align*}
for every $s\in\mathbb{C}_1.$

Since $\alpha$ is irrational, $\lbrace\alpha n\rbrace\neq\lbrace\alpha q\rbrace$ for every $n\neq q.$ Hence, applying the identity (\ref{saw}) to the $g_{\alpha q}$, gives for every $s\in\mathbb{C}_1$ 
\begin{align}
g_{\alpha q}(s)&=\underset{n\neq q}{\mathlarger\sum\limits_{n=1}^{\infty}}\dfrac{R(\lbrace\alpha n\rbrace-\lbrace\alpha q\rbrace)-R(\lbrace\alpha n\rbrace)+\lbrace\alpha q\rbrace}{n^s}\tag*{}\\
&={\mathlarger\sum\limits_{n=1}^{\infty}}\dfrac{R(\alpha n-\alpha q)}{n^s}-\left(J_{\boldsymbol\alpha}(s)-\dfrac{R(\alpha q)}{q^s}\right)+\lbrace\alpha q\rbrace\left(\zeta(s)-\dfrac{1}{q^s}\right)\tag*{}\\
&=\mathlarger\sum\limits_{n=q+1}^{\infty}\dfrac{R(\alpha(n-q))}{n^s}+\mathlarger\sum\limits_{n=1}^{q}\dfrac{R(\alpha(n-q))}{n^s}-J_{\boldsymbol\alpha}(s)+\lbrace\alpha q\rbrace\zeta(s)-\dfrac{1}{2q^s}\tag*{}\\
\label{Fg1}&=\mathcal{F}^qJ_{\boldsymbol\alpha}(s)-J_{\boldsymbol\alpha}(s)+\lbrace\alpha q\rbrace\zeta(s)+\mathlarger\sum\limits_{n=1}^{q}\dfrac{R(\alpha(n-q))}{n^s}-\dfrac{1}{2q^s}.
\end{align}
By relation (\ref{F}),
\begin{align}
g_{\alpha q}(s)=&\mathlarger\sum\limits_{n=q+1}^{2q}\dfrac{R(\alpha(n-q))}{n^s}+\mathlarger\sum\limits_{{m=1}}^{\infty}q^{m}{\binom{-s}{ m}}\mathcal{F}^qB^qJ_{\boldsymbol\alpha}(s+m)+\mathcal{F}^qB^qJ_{\boldsymbol\alpha}(s)\tag*{}\\
&-J_{\boldsymbol\alpha}(s)+\lbrace\alpha q\rbrace\zeta(s)+\mathlarger\sum\limits_{n=1}^{q}\dfrac{R(\alpha(n-q))}{n^s}-\dfrac{1}{2q^s}\tag*{}\\
\label{Fg}=&\mathlarger\sum\limits_{m=1}^{\infty}q^{m}{\binom{-s}{ m}}\phi_{q,m}(s)+\lbrace\alpha q\rbrace\zeta(s)+h_1(s),
\end{align}
where $h_1$ is an entire function, $\zeta$ has an analytic continuation to the whole complex plane except for a simple pole at $s=1$ wit residue 1, and 
\begin{align*}
\phi_{q,m}(s)=\mathcal{F}^qB^qJ_{\boldsymbol\alpha}(s+m)
\end{align*}
for $m\in\mathbb{N},$  are meromorphic functions defined in $\mathbb{C}_{\sigma_{\mu}(J_{\boldsymbol\alpha})-1}.$ 

Lemma \ref{f.l.} implies that the series of meromorphic functions in (\ref{Fg}) is uniformly convergent in compact subsets of $\mathbb{C}_{\sigma_{\mu}(J_{\boldsymbol\alpha})-1}.$
Thus, \begin{align*}\sigma_\mu(g_{\alpha q})\leq\max\left\{\sigma_{\mu}(\phi_{q,m}):m\geq 1\right\}=\sigma_\mu(J_{\boldsymbol\alpha})-1.
\end{align*}

  and 
  \begin{align*}1\in E(g_{\alpha q})&\subseteq E(\zeta)\cup \left(\bigcup\limits_{m=1}^{\infty}E(\phi_{q,m})\right)\subseteq\lbrace1\rbrace\cup\left(\bigcup\limits_{m=1}^{\infty}\left(E(J_{\boldsymbol\alpha})-m)\right)\right),
  \end{align*} 
  where the point $s=1$ is a simple pole of $g_{\alpha q}$ with residue $\lbrace\alpha q\rbrace.$
  
  Now assume that $\sigma_\mu(g_{\alpha q})<\sigma_\mu(J_{\boldsymbol\alpha})-1.$  From relation (\ref{Fg}) 
  \begin{align*}
  sq\phi_{q,1}(s)=-g_{\alpha q}(s)+\mathlarger\sum\limits_{m=2}^{\infty}q^{m}{\binom{-s}{ m}}\phi_{q,m}(s)+\lbrace\alpha q\rbrace\zeta(s)+h_1(s),
  \end{align*}
  where the series of meromorphic functions is uniformly convergent in compact subsets of $\mathbb{C}_{\sigma_{\mu}(J_{\boldsymbol\alpha})-2}.$ Hence,
  \begin{align*}
  \sigma_{\mu}(J_{\boldsymbol\alpha})-1=\sigma_{\mu}(sq\phi_{q,1})\leq\max\left\{\sigma_{\mu}(g_{\alpha q}),\sigma_{\mu}(\phi_{q,m}):m\geq 2\right\}\leq\sigma_{\mu}(J_{\boldsymbol\alpha})-2,
  \end{align*}
  which is impossible. Therefore, $\sigma_\mu(g_{\alpha q})=\sigma_\mu(J_{\boldsymbol\alpha})-1.$
  
It is left to consider the case of $g_{-\alpha q}$. Since $\lbrace-x\rbrace=1-\lbrace x\rbrace$ for all $x>0,$ and $\lbrace\alpha n\rbrace\neq1-\lbrace\alpha q\rbrace$ for every $n\in\mathbb{N},$ one has
 \begin{align*}
 g_{-\alpha q}(s)&={\mathlarger\sum\limits_{n=1}^{\infty}}\dfrac{R(\lbrace\alpha n\rbrace-(1-\lbrace\alpha q\rbrace))-R(\lbrace\alpha n\rbrace)+1-\lbrace\alpha q\rbrace}{n^s}\\
 &=\mathlarger\sum\limits_{n=1}^\infty\dfrac{R(\alpha(n+q))}{n^s}-J_{\boldsymbol\alpha}(s)+(1-\lbrace\alpha q\rbrace)\zeta(s)\tag*{},
 \end{align*}
 or
 \begin{align}\label{Bg}
g_{-\alpha q}(s)
&=\mathcal{B}^qJ_{\boldsymbol\alpha}(s)-J_{\boldsymbol\alpha}(s)+(1-\lbrace\alpha q\rbrace)\zeta(s),
 \end{align}
which is valid for every $s\in\mathbb{C}_1.$ As before, using this time relation (\ref{B}), we have
 \begin{align*}\sigma_\mu(g_{-\alpha q})=\sigma_\mu(J_{\boldsymbol\alpha})-1
\hspace*{0.5cm}\text{and}\hspace*{0.5cm}
1\in E(g_{-\alpha q})\subseteq\lbrace1\rbrace\cup\left(\bigcup\limits_{m=1}^{\infty}(E(J_{\boldsymbol\alpha})-m)\right),
\end{align*}
 where the point $s=1$ is a simple pole of $g_{-\alpha q}$ with residue $\lbrace-\alpha q\rbrace.$
  \end{proof}
  
\section{Beatty Zeta-functions and Sturmian Dirichlet series}

We now turn our focus on the Beatty Zeta-functions and Sturmian Dirichlet serires. 
The summatory functions of $\zeta_\alpha$ and $S_\alpha$ are given, respectively, by 
\begin{align*}
G_1(x)=\underset{n\in B(\alpha)}{\mathlarger\sum\limits_{n\leq x}}1=\left\lfloor\left(\lfloor x\rfloor+1\right)\beta\right\rfloor
\end{align*}
and
\begin{align*}G_2(x)=\mathlarger\sum\limits_{n\leq x}(\left\lceil\beta n\right\rceil-\left\lceil\beta(n-1)\right\rceil)=\left\lceil\beta \lfloor x\rfloor\right\rceil-1,
\end{align*}
 for every $x\in\mathbb{R}_+,$ where $\beta$ is the reciprocal of $\alpha.$ In addition, the terms that appear in those sums are non-negative. Thus, the Cahen's formulas for the abscissas of ordinary Dirichlet series  yield
 \begin{align*}
 \sigma_a(\zeta_\alpha)=\sigma_a(S_\alpha)= \sigma_c(\zeta_\alpha)=\sigma_c(S_\alpha)=1.
 \end{align*}
One of the techniques that can be used in order to achieve a meromorphic continuation of $\zeta_\alpha$ and $S_\alpha$ beyond the vertical line $1+i\mathbb{R},$ is Abel's summation formula. Indeed, after elementary computations, one has
\begin{align*}
\zeta_\alpha(s)=\beta\lfloor\alpha\rfloor^{1-s}\dfrac{s}{s-1}+s\mathlarger\int_{\lfloor\alpha\rfloor}^{\infty}\dfrac{G_1(u)-\beta u}{u^{s+1}}\mathrm{d}u
\end{align*} 
and
\begin{align*}
S_\alpha(s)=\beta\dfrac{s}{s-1}+s\mathlarger\int_{1}^{\infty}\dfrac{G_2(u)-\beta u}{u^{s+1}}\mathrm{d}u,
\end{align*}
valid for every $s\in\mathbb{C}_1.$ However, the integral expressions appearing in the above identities are uniformly convergent in half-planes $\mathbb{C}_\delta,$ for every $\delta>0.$ Therefore,
 both $\zeta_\alpha$ and $S_\alpha,$ can be continued analytically to $\mathbb{C}_0$ except for a simple pole at the point $s=1$ with residue $\beta.$ 

The question that arises is whether $\zeta_\alpha$ and $S_\alpha$  can be continued meromorphically even further to the left, beyond the imaginary axis, which is something that can not be obtained with Abel's summation formula anymore. Intuitively, it should be expected that, as in the case of $J_{\boldsymbol\alpha},$ the arithmetical character of $\alpha$ will play a prominent role to the answer of this question. This is what Theorem \ref{BS} states for $\alpha>1$ irrational number.

\begin{proof}[Proof of Theorem \ref{BS} ]Let $\alpha>1$ be irrational and $\alpha'$ be defined by
\begin{align*}
\dfrac{1}{\alpha}+\dfrac{1}{\alpha'}=1.
\end{align*}
The well-known Beatty's (or in some literature Rayleigh's) theorem states that the sets $\lbrace\lfloor\alpha n\rfloor:n\in\mathbb{N}\rbrace$ and  $\lbrace\lfloor\alpha' n\rfloor:n\in\mathbb{N}\rbrace$ form a partition of $\mathbb{N}.$ Observe that, for every $n\in\mathbb{N},$
\begin{align*}
\left\{\dfrac{\lfloor\alpha n\rfloor+1}{\alpha}\right\}=\left\{n+\dfrac{1-\lbrace\alpha n\rbrace}{\alpha}\right\}\in\left(0,\dfrac{1}{\alpha}\right)
\end{align*}
and
\begin{align*}
\left\{\dfrac{\lfloor\alpha' n\rfloor+1}{\alpha}\right\}=\left\{(\lfloor\alpha' n\rfloor+1)\left(1-\dfrac{1}{\alpha'}\right)\right\}\in\left(\dfrac{1}{\alpha},1\right).
\end{align*}
Thus, we obtain from (\ref{Hecke}),
\begin{align*}
\mathcal{F}\zeta_\alpha(s)=\mathlarger\sum\limits_{n=1}^\infty\dfrac{1}{(\lfloor\alpha n\rfloor+1)^s}=\mathlarger\sum\limits_{n\in A_{\beta}}\dfrac{1}{n^s}=g_{_\beta}(s)
\end{align*}
for every $s\in\mathbb{C}_1,$ or, by (\ref{Fg1}) and (\ref{Bg}),
\begin{align}\label{Fz}\zeta_\alpha=\mathcal{B}g_{_\beta}=\mathcal{B}\left(\mathcal{F}J_{\boldsymbol{\beta}}-J_{\boldsymbol{\beta}}+{\beta}\zeta-\dfrac{1}{2}\right)=J_{\boldsymbol{\beta}}-\mathcal{B}J_{\boldsymbol{\beta}}+\beta\zeta=\zeta-g_{_{-\beta}}.
\end{align}
Since $\tau_\alpha=\tau_{_\beta}$, Theorem \ref{gaq} and relation (\ref{Fz}) yield Theorem \ref{BS} for $\zeta_\alpha.$
 
In the case of Sturmian Dirichlet series,  it follows from (\ref{Bg}) and (\ref{Fz}) that, for every $s\in\mathbb{C}_1,$ 
\begin{align}
\mathcal{B}S_{\alpha}(s)&=\mathlarger\sum\limits_{n=1}^{\infty}\dfrac{\left\lceil\beta(n+1)\right\rceil-\left\lceil\beta  n\right\rceil}{n^s}\tag*{}\\
&=\mathlarger\sum\limits_{n=1}^{\infty}\dfrac{\beta(n+1)+1-\left\{\beta(n+1)\right\}-\beta n-1+\left\{\beta n\right\}}{n^s}\tag*{}\\
&=\mathlarger\sum\limits_{n=1}^\infty\dfrac{1}{n^s}-\mathlarger\sum\limits_{n=1}^\infty\dfrac{R\left(\beta(n+1)\right)-R\left(\beta n\right)+1-\beta}{n^s}\tag*{}\\
&=\zeta(s)-g_{_{-\beta}}(s)\tag*{}\\
&=\zeta_\alpha(s)\tag*{},
\end{align}
or
\begin{align}\label{subnice}
S_\alpha-1=S_\alpha-\left\lceil\beta\right\rceil=\mathcal{F}\mathcal{B}S_\alpha=\mathcal{F}\zeta_\alpha=g_{_\beta}.
\end{align}
Once more, Theorem \ref{gaq} and relation (\ref{subnice}) will yield Theorem \ref{BS} for $S_\alpha.$
\end{proof}

\section{The case of $\alpha\in(0,1]\cup\mathbb{Q}_+$}
In the case of a rational parameter $\alpha=\frac{p}{q}\geq1,$ where $p$ and $q$ are coprime natural numbers, the coefficients of $\zeta_\alpha$ and $S_\alpha$ show a periodic behaviour and their meromorphic continuation to the whole complex plane follows from the one of the Hurwitz Zeta-function
\begin{align*}
\zeta(s;\gamma):=\mathlarger\sum\limits_{n=0}^{\infty}\dfrac{1}{(n+\gamma)^s},
\end{align*}
where $\gamma\in(0,1]$ is a fixed parameter and the series converges for $s\in\mathbb{C}_1.$ Indeed, the Hurwitz Zeta-function can be continued analytically to the whole complex plane except for a simple pole at $s=1$ with residue 1, while the absolute convergence of $\zeta_\alpha$ and $S_\alpha$ in $\mathbb{C}_1$ allows us to rearrange the terms of the respective series
such that
\begin{align*}
\zeta_\alpha(s)\hspace*{-0.05cm}=\hspace*{-0.05cm}\mathlarger\sum\limits_{\ell=1}^{q}\mathlarger\sum\limits_{n=0}^{\infty}
\dfrac{1}{\left\lfloor \frac{p}{q}(nq+\ell)\right\rfloor^s}\hspace*{-0.05cm}=\hspace*{-0.05cm}\mathlarger\sum\limits_{\ell=1}^{q}\mathlarger\sum\limits_{n=0}^{\infty}
\frac{1}{\left( np+\left\lfloor\frac{\ell p}{q}\right\rfloor\right)^s}\hspace*{-0.05cm}=\hspace*{-0.05cm}\frac{1}{p^s}{\mathlarger\sum\limits_{\ell=1}^{q}}\zeta\left(s;\frac{1}{p}\left\lfloor\frac{\ell p}{q}\right\rfloor\right)
\end{align*}
and
\begin{align*}
S_\alpha(s)&=\mathlarger\sum\limits_{\ell=1}^{p}\mathlarger\sum\limits_{n=0}^{\infty}
\dfrac{\left\lceil\frac{q}{p}(np+\ell)\right\rceil-\left\lceil\frac{q}{p}(np+\ell-1)\right\rceil}{(np+\ell)^s}\\
&=\mathlarger\sum\limits_{\ell=1}^{p}\mathlarger\sum\limits_{n=0}^{\infty}
\dfrac{\left\lceil\frac{\ell q}{p}\right\rceil-\left\lceil\frac{(\ell-1)q}{p}\right\rceil}{(np+\ell)^s}\\
&=\dfrac{1}{p^s}\mathlarger\sum\limits_{k=0}^{q-1}\mathlarger\sum\limits_{\ell=\left\lfloor\frac{kp}{q}\right\rfloor+1}^{\left\lfloor\frac{(k+1)p}{q}\right\rfloor}\left(\left\lceil\frac{\ell q}{p}\right\rceil-\left\lceil\frac{(\ell-1)q}{p}\right\rceil\right)\zeta\left(s;\dfrac{\ell}{p}\right)\\
&=\dfrac{1}{p^s}\mathlarger\sum\limits_{k=1}^{q}\zeta\left(s;\dfrac{1}{p}\left(\left\lfloor\frac{kp}{q}\right\rfloor+1\right)\right).
\end{align*}

The case of $0<\alpha<1$ is contained in a sense in the case of $\alpha>1.$ Indeed, let $0<\alpha<1$ and $\beta$ the reciprocal $\alpha.$ 
The corresponding Sturmian Dirichlet series can be expressed, for $s\in\mathbb{C}_1,$ as
\begin{align*}
\mathcal{B}S_\alpha(s)&=\mathlarger\sum\limits_{n=1}^{\infty}
\dfrac{\left\lceil\beta(n+1)\right\rceil-\left\lceil\beta n\right\rceil}{n^s}\\
&=\mathlarger\sum\limits_{n=1}^{\infty}\dfrac{\left\lceil\left\{\beta\right\} (n+1)\right\rceil-\left\lceil\left\{\beta\right\} n\right\rceil+\left\lfloor\beta\right\rfloor}{n^s}\\
&=\left\lfloor\beta\right\rfloor\zeta(s)+\mathcal{B}S_{\left\{\beta\right\}^{-1}}(s).
\end{align*}
Applying the $\mathcal{F}$ operator on both sides, yields eventually \begin{align*}S_{\alpha}=\left\lfloor\beta\right\rfloor\zeta+S_{\left\{\beta\right\}^{-1}}
.\end{align*}
On the other hand, the running index of the sum in the Dirichlet series $\zeta_\alpha$ has to start from $\left\lfloor\beta\right\rfloor+1$ such that everything is well defined. Notice, however, that for $s\in\mathbb{C}_1,$ 
\begin{align*}
\zeta_\alpha(s)=\mathlarger\sum\limits_{n=\left\lfloor\beta\right\rfloor+1}^\infty\dfrac{1}{\left\lfloor\alpha n\right\rfloor^s}=\mathlarger\sum\limits_{n=1}^\infty\dfrac{a_n}{n^s},
\end{align*}
where $a_n=\#\left\{m\in\mathbb{N}:n=\lfloor\alpha m\rfloor\right\}
=\left\lceil\beta (n+1)\right\rceil-\left\lceil\beta n\right\rceil
.$ Thus, we have, as in the case of $\alpha>1,$ that \begin{align*}\zeta_\alpha=\mathcal{B}S_{\alpha}
=\left\lfloor\beta\right\rfloor\zeta+\mathcal{B}S_{\left\{\beta\right\}^{-1}}
=\left\lfloor\beta\right\rfloor\zeta+\zeta_{\left\{\beta\right\}^{-1}}
.\end{align*}
Since, $\lbrace\beta\rbrace^{-1}>1,$ we can include the case of $0<\alpha<1$ to the results we have proved so far.

\section{Hurwitz Zeta-functions associated with Beatty sequences}
We conclude with a generalization of our previous results regarding the Beatty Zeta-function, which can be compared with the ones in  \cite{banks2017certain}.

If $(\alpha,\gamma)\in(0,+\infty)\times(0,1],$ is a fixed pair of real numbers, denote by
\begin{align*}
\zeta_\alpha(s;\gamma)=\mathlarger\sum\limits_{n=0}^{\infty}\dfrac{1}{(\lfloor\alpha n\rfloor+\gamma)^s},
\end{align*}
converging for $s\in\mathbb{C}_1,$ the Hurwitz Zeta-function $\zeta(s;\gamma)$ associated with the Beatty sequence $B(\alpha).$ In the case of $\gamma=1,$ one has $\zeta_\alpha(\cdot\ ;1)={{\mathcal{F}\zeta_\alpha+1}}.$

If $0<\gamma<1$ then, for every $s\in\mathbb{C}_1,$
\begin{align*}
\zeta_\alpha(s;\gamma)=\mathlarger\sum\limits_{n=0}^{\infty}\dfrac{1}{(\lfloor\alpha n\rfloor+\gamma)^s}=\gamma^{-s}+\mathop{\mathlarger\sum\limits_{n=1}^{\infty}}_{n\in B(\alpha)}\dfrac{1}{(n+\gamma)^s}
\end{align*}
or, using the binomial identity,
\begin{align*}
\zeta_\alpha(s;\gamma)-\gamma^s=\mathop{\mathlarger\sum\limits_{n=1}^{\infty}}_{n\in B(\alpha)}\dfrac{1}{n^s}\left( 1+\dfrac{\gamma}{n}\right)^{-s}=\mathop{\mathlarger\sum\limits_{n=1}^{\infty}}_{n\in B(\alpha)}\dfrac{1}{n^s}\mathlarger\sum\limits_{m=0}^{\infty}{\binom{-s}{ m}}\left(\dfrac{\gamma}{n}\right)^m.
\end{align*}
A variant of Lemma \ref{f.l.} yields, for every $s\in\mathbb{C}_1,$
\begin{align*}
\zeta_\alpha(s;\gamma)-\gamma^{-s}&=\mathlarger\sum\limits_{m=0}^{\infty}{\binom{-s}{ m}}\gamma^m\mathop{\mathlarger\sum\limits_{n=1}^{\infty}}_{n\in B(\alpha)}\dfrac{1}{n^{s+m}}=\mathlarger\sum\limits_{m=0}^{\infty}{\binom{-s}{ m}}\gamma^m\zeta_\alpha(s+m).
\end{align*}
Therefore, with minor modifications in the arguments discussed in the previous sections, one obtains 

\begin{corollary*} If $(\alpha,\gamma)\in(0,+\infty)\times(0,1),$ then
\begin{align*}
\sigma_{\mu}(\zeta_{\alpha}(\cdot\ ;\gamma))=\sigma_{\mu}(\zeta_{\alpha})
\end{align*}
and
\begin{align*}E(\zeta_{\alpha}(\cdot\ ;\gamma))\subseteq\bigcup\limits_{m=1}^{\infty}(E(\zeta_{\alpha})-m).
\end{align*}
For every $(\alpha,\gamma)\in((0,+\infty)\cap\mathbb{Q})\times(0,1],$ the function $\zeta_\alpha(s;\gamma)$ has a meromorphic continuation to the whole complex plane with at most simple poles at the points $s=1-m,$ $m\in\mathbb{N}_0,$ with residue 
\begin{align*}\beta{\binom{1-m}{ m}}\gamma^m=\left\{\begin{array}{ll}\beta&,m=0\\
0&,m\geq1
\end{array}\right.,
\end{align*}
respectively. Hence, $\zeta_\alpha(\cdot\ ;\gamma)$ has only a simple pole at $s=1.$

For every $\left(\alpha,\gamma)\in((0,+\infty)\setminus\mathbb{Q}\right)\times(0,1],$ the function $\zeta_\alpha(s;\gamma)$ can be continued analytically to $\mathbb{C}_{-\frac{1}{\tau_{\alpha}}}$ except for a simple pole at $s=1$ with residue $\beta.$ If $\tau_\alpha>1,$ the vertical line $-\dfrac{1}{\tau_\alpha}+i\mathbb{R}$ is the natural boundary for $\zeta_\alpha(s;\gamma).$ In the special case where $\alpha$ is a positive quadratic irrational, $\zeta_\alpha(s;\gamma)$ has a meromorphic continuation to the whole complex plane with a simple pole at $s=1$ and at most simple poles at $s=-k\pm\dfrac{2\pi i n}{\log\eta_\alpha},$ $(k,n)\in\mathbb{N}\times\mathbb{N}_0.$
\end{corollary*}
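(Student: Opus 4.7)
The proof splits into three cases, all building on the identity displayed just before the corollary,
\[
\zeta_\alpha(s;\gamma)-\gamma^{-s}=\sum_{m=0}^{\infty}\binom{-s}{m}\gamma^{m}\zeta_\alpha(s+m),\qquad s\in\mathbb{C}_1.
\]
The easy case $\gamma=1$ is settled by the identity $\zeta_\alpha(\cdot\,;1)=\mathcal{F}\zeta_\alpha+1$ recorded just before the corollary, which transfers all of the conclusions from Theorem~\ref{BS} through Lemma~\ref{f.t.}. The rational case $\alpha=p/q$ is settled as in Section 5: splitting $n=kq+\ell$ with $\ell\in\{0,\ldots,q-1\}$ and $k\geq 0$ gives
\[
\zeta_\alpha(s;\gamma)=\frac{1}{p^s}\sum_{\ell=0}^{q-1}\zeta\!\left(s;\tfrac{1}{p}\!\left(\left\lfloor\tfrac{p\ell}{q}\right\rfloor+\gamma\right)\right),
\]
which is meromorphic on all of $\mathbb{C}$ with a unique simple pole at $s=1$ of residue $q/p=\beta$, the other potential residues at $s=1-m$, $m\geq 1$, vanishing because the Hurwitz zeta has no pole there.

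For the remaining case ($\alpha$ irrational, $0<\gamma<1$), the key technical step is a variant of Lemma~\ref{f.l.} with $\zeta_\alpha(s+m)$ taking the place of $a_n n^{-(s+m)}$. Using the majorisation $|\binom{-s}{m}\gamma^m|\leq\binom{-|s|}{m}(-\gamma)^m$ (the inequality proved inside Lemma~\ref{f.l.}) together with $\sum_{m\geq 0}\binom{-|s|}{m}(-\gamma)^m=(1-\gamma)^{-|s|}<\infty$ — here the assumption $\gamma<1$ is essential — and the absolute convergence of $\zeta_\alpha$ on $\mathbb{C}_1$, one obtains uniform convergence on any compact $K\subseteq\mathbb{C}_{\sigma_\mu(\zeta_\alpha)}$ of the tails $\sum_{m\geq m_0}\binom{-s}{m}\gamma^m\zeta_\alpha(s+m)$, where $m_0$ is chosen so that $s+m$ avoids $E(\zeta_\alpha)$ for all $s\in K$ and $m\geq m_0$. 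This yields meromorphic continuation to $\mathbb{C}_{\sigma_\mu(\zeta_\alpha)}$ with pole set contained in $\bigcup_{m\geq 0}(E(\zeta_\alpha)-m)$; the $m=0$ term is responsible for the simple pole at $s=1$ with residue $\beta$, and for quadratic-irrational $\alpha$ the pole set inherits the lattice structure from Theorem~\ref{BS}. The case $\alpha\in(0,1)$ is reduced to $\alpha>1$ exactly as in Section 5.

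To promote the upper bound $\sigma_\mu(\zeta_\alpha(\cdot;\gamma))\leq\sigma_\mu(\zeta_\alpha)$ into equality (and thereby recover the natural-boundary statement when $\tau_\alpha>1$), I would mimic the contradiction at the end of the proof of Theorem~\ref{gaq}: assuming strict inequality, solve the identity for the $m=0$ term,
\[
\zeta_\alpha(s)=\zeta_\alpha(s;\gamma)-\gamma^{-s}-\sum_{m\geq 1}\binom{-s}{m}\gamma^{m}\zeta_\alpha(s+m),
\]
observe that by the same variant of Lemma~\ref{f.l.} the tail is meromorphic on $\mathbb{C}_{\sigma_\mu(\zeta_\alpha)-1}$, and conclude that $\zeta_\alpha$ would admit a continuation past its own natural boundary, contradicting Theorem~\ref{BS}. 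The only genuine obstacle is the variant of Lemma~\ref{f.l.}; because $\sigma_a(\zeta_\alpha)=1$, the factors $\zeta_\alpha(s+m)$ are uniformly bounded on compact subsets of $\mathbb{C}$ once $m$ is large enough that $\mathrm{Re}(s+m)>1$, so the adaptation is essentially cosmetic.
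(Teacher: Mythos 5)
Your proposal is correct and follows essentially the same route the paper intends: it fills in precisely the ``minor modifications'' the paper leaves unstated, namely the $\gamma$-analogue of Lemma~\ref{f.l.} (where you rightly isolate $\gamma<1$ as the reason $\gamma=1$ must be handled separately via $\zeta_\alpha(\cdot\,;1)=\mathcal{F}\zeta_\alpha+1$), the transfer of poles through the identity $\zeta_\alpha(s;\gamma)-\gamma^{-s}=\sum_{m\ge0}\binom{-s}{m}\gamma^m\zeta_\alpha(s+m)$, and the contradiction argument from the proof of Theorem~\ref{gaq} to upgrade $\sigma_\mu(\zeta_\alpha(\cdot\,;\gamma))\le\sigma_\mu(\zeta_\alpha)$ to equality. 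The only (harmless) deviations are that you treat rational $\alpha$ by decomposing $\zeta_\alpha(s;\gamma)$ directly into Hurwitz zeta-functions rather than reading the residues $\beta\binom{m-1}{m}\gamma^m$ off the identity, and that you solve the identity for the $m=0$ term (coefficient $1$) instead of the $m=1$ term, which if anything is cleaner.
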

\section{Concluding Remarks}
For $\alpha>0$ we have seen that 
\begin{align*}\mathlarger\sum\limits_{n=1}^{\infty}\dfrac{\chi_{_{B(\alpha)}}(n)}{n^s}=\zeta_\alpha(s)=\mathcal{B}S_\alpha(s)=\mathlarger\sum\limits_{n=1}^{\infty}\dfrac{\left\lceil\beta(n+1)\right\rceil-\left\lceil\beta n\right\rceil}{n^s},
\end{align*}
for every $s\in\mathbb{C}_1.$ The uniqueness theorem for ordinary Dirichlet series implies that the corresponding coefficients of $\zeta_\alpha$ and $\mathcal{B}S_\alpha$ are identical, or
\begin{align*}
\left\lceil\beta(n+1)\right\rceil-\left\lceil\beta n\right\rceil=\chi_{_{B(\alpha)}}(n)=\left\{\begin{array}{ll}1&,n\in B(\alpha)\\
0&,n\notin B(\alpha)
\end{array}
\right..
\end{align*}

The relation of $\zeta_\alpha$ and $S_\alpha$, which at first sight is not noticeable, explains the similarities of the results that can be found about them in different sources of the literature, seemingly unrelated, like for example in  \cite{kwon2015one} and \cite{o2002generating}. It would be interesting to consider which properties for one Dirichlet series apply also to the other one, something that will not be pursued here though.

So far we have proved which points of the complex plane could be possibly poles for $\zeta_\alpha$ and $S_\alpha,$ in case of a quadratic irrational $\alpha.$ Only the point $s=1$ seems to be definitely a pole. However, Hecke's method \cite{hecke1922analytische} allows to specifically locate all poles of $J_{\boldsymbol\alpha}.$ Thus, in view of relations (\ref{Fg}), (\ref{Bg}), (\ref{Fz}) and (\ref{subnice}) the same can be done for $\zeta_\alpha$ and $S_\alpha.$

Quite often the irrationals $\alpha$ with $\tau_\alpha=\tau<\infty$ are called $\tau$-Diophantine numbers. The set of $1$-Diophantine numbers has full Lebesgue measure in $\mathbb{R}.$ It follows from the celebrated theorem of Roth, that the algebraic irrationals are among these numbers. On the other hand, the irrationals $\alpha$ with $\tau_\alpha=\infty$ are called Liouville numbers and their set is the complement of the set of 1-Diophantine numbers, whence of zero measure.
The reader interested on properties of such numbers, may look at \cite{lang1995introduction} or \cite{steuding2005diophantine}. 

Returning to $J_\alpha,$ $\zeta_\alpha$ and $S_\alpha,$ we have already seen that if $\tau_\alpha>1,$ then the vertical line $1-\dfrac{1}{\tau_\alpha}+i\mathbb{R}$ is the natural boundary of $J_{\boldsymbol{\alpha}}$ and $-\dfrac{1}{\tau_\alpha}+i\mathbb{R}$ of $\zeta_\alpha$ and $S_\alpha.$ Hardy and Littlewood \cite{hardy1922some} conjectured that this is also the case for $J_{\boldsymbol\alpha}$ if $\alpha$ is 1-Diophantine number but not quadratic irrational, for which a meromorphic continuation to the whole complex plain has been given.  The conjecture is still open.


\end{document}